
\documentclass{amsart}

\usepackage[hyperindex]{hyperref}

\usepackage[nosections]{macs-ind-irr}

\listfiles
\begin{document}

\date{20 October 2007; Revised 3 June 2008}

\author{Marius Ionescu}
\address{Department of Mathematics \\ Cornell University \\ Ithaca, NY
14853-4201}

\email{mionescu@math.cornell.edu}

\author{Dana Williams}
\address{Department of Mathematics \\ Dartmouth College \\ Hanover, NH
03755-3551}

\title{\boldmath Irreducible Representations of Groupoid \cs-algebras}
\email{dana.p.williams@Dartmouth.edu}

\begin{abstract}
  If $G$ is a second countable locally compact Hausdorff groupoid with
  Haar system, we show that every representation induced from an
  irreducible representation of a stability group is irreducible.
\end{abstract}

\subjclass{Primary: 46L55, 46L05.  Secondary: 22A22}

\maketitle

\section{Introduction}
\label{sec:introduction}

To understand the fine structure of a \cs-algebra, a good first step
is to describe the primitive ideals in a systematic way.  Therefore
producing a prescription for a robust family of irreducible
representations is very important.  In the case of transformation
group \cs-algebras, it is well known that representations induced from
irreducible representations of the stability groups are themselves
irreducible.  (Furthermore, in many cases these representations
exhaust the irreducible representations, or at least their kernels
exhaust the collection of primitive ideals, and a fairly complete
description of the primitive ideal space is possible.  For a more
extensive discussion, see \cite{wil:crossed}*{\S \S8.2--3}.)  In the
separable case, the irreducibility of representations induced from
irreducible representations of the stability groups is due to Mackey
\cite{mac:pnasus49}*{\S6} (see also Glimm's
\cite{gli:pjm62}*{pp.~900--901}).  The result for general
transformation group \cs-algebras was proved in
\cite{wil:tams81}*{Proposition~4.2} (see also
\cite{wil:crossed}*{Proposition~8.27}).  The corresponding result for
groupoid \cs-algebras has been proved in an \emph{ad hoc} manner in a
number of special cases (see Example~\ref{ex-stab-grps} for specific
references).  In this note, we want to prove the
result for general \emph{separable} groupoids.  In so doing, we take
the opportunity to formalize the theory of inducing representations
from a general closed subgroupoid.  Of course, induction is treated in
Renault's thesis \cite{ren:groupoid}*{Chap.~II \S2}.  However, at the
time \cite{ren:groupoid} was written, Renault did not yet have the
full power of his disintegration theorem
(\cite{ren:jot87}*{Proposition~4.2} or see
\cite{muhwil:nyjm08}*{Proposition~7.8}) available.  Nor was Rieffel's
theory of Morita equivalence fully developed.  So it seems appropriate
to give a modern treatment here using a contemporary version of
Rieffel's theory, and the disintegration theorem in the form of the
equivalence theorem from \cite{mrw:jot87}*{Theorem~2.8}.

In Section~\ref{sec:induc-repr}, we derive the general process for
inducing groupoid representations of a closed subgroupoid $H$ of $G$
to $G$
--- actually, we induce representations 
from $\cs(H)$ to $\cs(G)$.  In
Section~\ref{sec:main-theorem}, we specialize to the case where $H$ is
an isotropy group, $H=G(u)=G^{u}_{u}:=\set{x\in G:s(x)=r(x)}$, and
prove the main result.

Throughout, $G$ will be a second countable locally compact Hausdorff
groupoid.  Second countability, in the form of the separability of
$\cs(G)$, is necessary in Section~\ref{sec:induc-repr} in order to
invoke the disintegration theorem.  Although separability might be
unnecessary in the proof of the main theorem, we felt that, as much of
the deep theory of groupoid \cs-algebras uses the disintegration
result in one form or another, there was little to be gained which
would justify the additional work of adjusting the proof to handle the
general case.  In addition, we always assume that $G$ and $H$ have
(continuous) Haar systems.  We adopt the usual conventions that
representations of \cs-algebras are nondegenerate and that
homomorphisms between \cs-algebras are necessarily $*$-preserving.

\section{Inducing Representations}
\label{sec:induc-repr}

We assume that $G$ is a second countable locally compact groupoid with
Haar system $\set{\lambda^{u}}_{u\in\go}$.  Let $H$ be a closed
subgroupoid of $G$ with Haar system $\set{\alpha^{u}}_{u\in\ho}$.
Since $H$ is closed in $G$, we also have $\ho$ closed in $\go$.  Then
$\GsH:=s^{-1}(\ho)$ is a locally compact free and proper right
$H$-space and we can form the imprimitivity groupoid $H^{G}$ as
follows.  The space
\begin{equation*}
  \GsH*_{s}\GsH:=\set{(x,y)\in \GsH\times \GsH:s(x)=s(y)}
\end{equation*}
is a free and proper right $H$-space for the diagonal action
$(x,y)\cdot h:=(x h,y h)$.  Consequently, the orbit space
\begin{equation*}
  H^{G}:=(\GsH*_{s}\GsH)/H
\end{equation*}
is a locally compact Hausdorff space.  Following
\cite{mrw:jot87}*{\S2}, $H^{G}$ is a groupoid in a natural way.  If
$[x,y]$ denotes the orbit of $(x,y)$ in $H^{G}$, then the composable
pairs are given by
\begin{equation*}
  (H^{G})^{(2)}:=\set{\bigl([x,y],[z,w]\bigr):y\cdot H=z\cdot H},
\end{equation*}
and the groupoid operations are given by
\begin{equation*}
  [x,y][y h,z]:=[x,z
  h^{-1}]\quad\text{and}\quad[x,y]^{-1}:=[y,x]. 
\end{equation*}
We can identify $(H^{G})^{(0)}$ with $\GsH/H$ and then
\begin{equation*}
  r\bigl([x,y]\bigr)=x\cdot H\quad\text{and}\quad s\bigl([x,y]\bigr)
  =y\cdot H.
\end{equation*}
It is not hard to check that $H^{G}$ acts freely and properly on the
left of $\GsH$:
\begin{equation*}
  [x,y]\cdot (y h)=x h,
\end{equation*}
and that $\GsH$ is then a $(H^{G},H)$-equivalence as in
\cite{mrw:jot87}*{Definition~2.1}. 

To get a Haar system on $H^{G}$, we proceed as in
\cite{kmrw:ajm98}*{\S5}.  Since $\GsH$ is closed in $G$ and since
$\lambda$ is a Haar system on $G$, it is not hard to check that
\begin{equation*}
  \beta'(\phi)(x):=\int_{G} \phi(y)\,d\lambda_{s(x)}(y)\quad\text{$\phi \in
    C_{c}(\GsH)$} 
\end{equation*}
is a full equivariant $s$-system for the map $s:\GsH\to \ho$ (as
defined in \cite{kmrw:ajm98}*{\S5}).  Therefore
\cite{kmrw:ajm98}*{Proposition~5.2} implies that we get a Haar system
$\set{\beta^{x\cdot H}}_{x\cdot H\in (H^{G})^{(0)}}$ for $H^{G}$ via
\begin{equation*}
  \beta(F)(x\cdot H)=\int_{H^{G}}F\bigl([x,y]\bigr) \, d\beta^{x\cdot
    H}\bigl([x,y]\bigr) = \int_{G}F\bigl([x,y]\bigr)
  \,d\lambda_{s(x)}(y). 
\end{equation*}

Since both $H$ and $H^{G}$ have Haar systems,
\cite{mrw:jot87}*{Theorem~2.8} implies that $C_{c}(\GsH)$ is a
pre-$C_{c}(H^{G},\beta) \sme C_{c}(H,\alpha)$-\ib\ with actions and
inner products given by
\begin{align}
  \label{eq:1}
  F\cdot \phi(z)&=\int_{G} F\bigl([x,y]\bigr) \phi(y)\,d\lambda_{s(z)}
  (y) \\
\phi\cdot g(z)&=\int_{H} \phi(z h)g(h^{-1}) \,d\alpha^{s(z)} (h)
\label{eq:2}\\
\Rip<\phi,\psi>(h)&= \int_{G} \overline{\phi(y)} \psi(y h)\,
d\lambda_{r(h)} (y) \label{eq:3}\\
\Lip<\phi,\psi>\bigl([x,y]\bigr) &= \int_{H} \phi(y
h)\overline{\psi(x h)} \, d\alpha^{s(x)}(h).\label{eq:4}
\end{align}
We will write $\X=\X_{H}^{G}$ for the completion of $C_{c}(\GsH)$ as
a $\cs(H^{G})\sme\cs(H)$-\ib.

If $L$ is a representation of $\cs(H,\alpha)$, then we write $\xind L$
for the representation of $\cs(H^{G},\beta)$ induced via $\X$ (see the
discussion following \cite{rw:morita}*{Proposition~2.66}).  Recall
that $\xind L$ acts on the completion $\Hind$ of $C_{c}(\GsH)\atensor
\H_{L}$ with respect to the pre-inner product given on elementary
tensors by
\begin{equation*}
  \ip(\phi\tensor h|\psi\tensor
  k)=\bip(L\bigl(\Rip<\psi,\phi>\bigr)h|k). 
\end{equation*}
If $\phi\tensor_{H}h$ denotes the class of $\phi\tensor h$ in $\Hind$,
then
\begin{equation*}
  (\xind L)(F)(\phi\tensor_{H}h)=F\cdot \phi\tensor_{H}h.
\end{equation*}

To get an induced representation of $\cs(G)$ out of this machinery
(i.e., using \cite{rw:morita}*{Proposition~2.66}), we need a nondegenerate
homomorphism of \cs(G) into $\mathcal{L}(\X)$.  If $f\in C_{c}(G)$ and
$\phi\in C_{c}(\GsH)$ we can define
\begin{equation}\label{eq:10}
  f\cdot \phi(z)=\int_{G} f(y)\phi(y^{-1}z)\,d\lambda^{r(z)}(y).
\end{equation}

\begin{remark}
  \label{rem-convolution}
  Since $\GsH$ is closed in $G$, each $\phi\in C_{c}(\GsH)$ is the
  restriction of an element $f_{\phi}\in C_{c}(G)$.  Thus we can write
  \begin{equation*}
    \Rip<\phi,\psi>=\phi*\psi\quad\text{and}\quad f\cdot \phi=f*\phi,
  \end{equation*}
where, for example, $\phi*\psi$ should be interpreted as
$f_{\phi}*f_{\psi}$ restricted to $\GsH$ --- the point being that the
restriction is independent of our choice of $f_{\phi}$ and
$f_{\psi}$.  Similarly, $f*\phi$ is meant to be the restriction of
$f*f_{\phi}$ to $\GsH$.
\end{remark}

If $\rho$ is a state on $\cs(H)$, then
\begin{equation*}
  \ip(\cdot|\cdot)_{\rho}:=\rho\bigl(\Rip<\cdot,\cdot>\bigr)
\end{equation*}
is a pre-inner product on $C_{c}(\GsH)$ with Hilbert space completion
denoted by $\H_{\rho}$. If we let $V(f)\phi:=f\cdot\phi$, then
it follows from direct computation, or by invoking
Remark~\ref{rem-convolution} above, that
\begin{equation*}
  \Rip<V(f)\phi,\psi>=\Rip<f\cdot \phi,\psi> = \Rip<\phi,f^{*}\cdot
  \psi> = \Rip<\phi,V(f^{*})\psi>.
\end{equation*}
Thus $V$ induces a map of $C_{c}(G)$ into the
linear operators on the dense image of $C_{c}(\GsH)$ in $\H_{\rho}$
which clearly satisfies the
axioms of Renault's disintegration theorem (see, e.g.,
\cite{muhwil:nyjm08}*{Theorem~7.8} or
\cite{ren:jot87}*{Proposition~4.2}).  In particular, we obtain a
bona fide representation of $\cs(G)$ on $\Hind$,  and it follows that
\begin{equation*}
 \rho\bigl(\Rip<f\cdot \phi,f\cdot \phi>\bigr) \le \|f\|_{\cs(G)}^{2}
  \rho\bigl(\Rip<\phi,\phi>\bigr). 
\end{equation*}
Since this holds for all $\rho$,
\begin{equation*}
  \Rip<f\cdot \phi,f\cdot \phi>\le \|f\|^{2}\Rip<\phi,\phi>,
\end{equation*}
and $V(f)$ is a bounded adjointable operator on $\X$.  Therefore we
obtain an induced representation $\indgh L$ of $\cs(G)$ on $\Hind$
such that
\begin{equation*}
  (\indgh L)(f)(\phi\tensor_{H}h)=f* \phi\tensor_{H}h.
\end{equation*}

\begin{remark}
  \label{rem-composition}
  Since $M\bigl(\cs(H^{G})\bigr)\cong \mathcal{L}(\X)$
  (\cite{rw:morita}*{Corollary~2.54 and Proposition~3.8}), it is not
  hard to see that $\indgh L$ is the composition of $V$ with the
  natural extension of $\xind L$ to $\mathcal{L}(\X)$.
\end{remark}

\begin{example}
  \label{ex-stab-grps}
  In the next section, we will be exclusively interested in the
  special case of the above where $H$ is the stability group at a
  $u\in \go$.  That is,
  \begin{equation}
    \label{eq:5}
    H=G(u):=G^{u}_{u}=\set{x\in G:s(x)=u=r(x)}.
  \end{equation}
(Thus, $H^{(0)}=\set u$.)   In this case, we obtain the induced
representations used to establish special cases of
Theorem~\ref{thm-main} 
in  
\citelist{\cite{muhwil:ms90}*{Lemma~2.4}
\cite{muhwil:ms92}*{Lemma~3.2}
\cite{mrw:tams96}*{Lemma~2.5}
\cite{cla:iumj07}*{Lemma~4.2}\cite{facska:jot82}*{\S5}}.
\end{example}

One advantage of having a formal theory of induction for
representations of groupoid \cs-algebras is that we can apply the
Rieffel machinery.  An example is the following version of induction
in stages.  The proof, modulo technicalities, is a
straightforward modification of Rieffel's original ``\cs-version''
from \cite{rie:aim74}*{Theorem~5.9}.  For future reference, we've
worked out the details of the proof in the last section.

\begin{thm}[Induction in Stages]
  \label{thm-stages}
 Suppose that $H$ and $K$ are closed subgroupoids of a second
 countable locally compact Hausdorff groupoid $G$ with $H\subset K$.
 Assume that $H$, $K$ and $G$ have Haar systems. If $L$ is a
 representation of $\cs(H)$, then 
 \begin{equation*}
   \Ind_{H}^{G}L\quad\text{and}\quad \Ind_{K}^{G}\bigl(\Ind_{H}^{K} L\bigr)
 \end{equation*}
are equivalent representations of $\cs(G)$.
\end{thm}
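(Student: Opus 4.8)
The plan is to prove the induction-in-stages result by constructing a $\cs(G)\sme\cs(H)$-imprimitivity-bimodule isomorphism between the module $\X_H^G$ that implements $\Ind_H^G$ directly, and the composite module $\X_K^G\tensor_{\cs(K)}\X_H^K$ that implements $\Ind_K^G\circ\Ind_H^K$; once such an isomorphism of bimodules is in hand, Rieffel's general machinery (the fact that induction via the internal tensor product of bimodules agrees with iterated induction, as in \cite{rie:aim74}*{Theorem~5.9} and \cite{rw:morita}*{Theorem~3.29}) gives that the two induced representations are equivalent. So the bulk of the work is to exhibit this isomorphism at the level of the dense $C_c$ subspaces and check that it respects the relevant inner products.

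First I would recall the three equivalence spaces in play. Inducing from $H$ to $G$ uses $\X_H^G$, the completion of $C_c(\GsH)$; inducing from $H$ to $K$ uses $\X_H^K$, the completion of $C_c(K_{s(\ho)}):=C_c\bigl(s^{-1}(\ho)\cap K\bigr)$; and inducing from $K$ to $G$ uses $\X_K^G$, the completion of $C_c(\GsK)$ where $\GsK:=s^{-1}(\ko)$. The target of the isomorphism is the internal tensor product $\X_K^G\tensor_{\cs(K)}\X_H^K$, a $\cs(G)\sme\cs(H)$-bimodule. Since $H\subset K\subset G$, there is a natural candidate map on the level of functions: for $\psi\in C_c(\GsK)$ and $\phi\in C_c(K_{s(\ho)})$ one forms the convolution-type expression
\begin{equation*}
  \Psi(\psi\tensor\phi)(x)=\int_{K}\psi(xk)\phi(k^{-1})\,d\beta_K^{s(x)}(k),
\end{equation*}
which lands in $C_c(\GsH)$ because $x\in\GsH$ forces $s(x)\in\ho$, so the integrand is supported where $s(xk)\in\ho$. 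I would first check that $\Psi$ maps into $C_c(\GsH)$ and has dense range (density follows from a standard partition-of-unity / approximate-identity argument, using that $C_c(K_{s(\ho)})$ contains approximate units for the $\cs(K)$-action).

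The central computation is to verify that $\Psi$ is inner-product preserving for the $\cs(H)$-valued inner products: that is, that
\begin{equation*}
  \Rip<\Psi(\psi\tensor\phi),\Psi(\psi'\tensor\phi')>_{\cs(H)}
  =\bigl\langle\phi,L'\bigl(\Rip<\psi,\psi'>_{\cs(K)}\bigr)\phi'\bigr\rangle
\end{equation*}
where the right-hand side is the $\cs(H)$-valued inner product on $\X_K^G\tensor_{\cs(K)}\X_H^K$, computed via formula \eqref{eq:3} applied successively over $K$ and then $H$. Unwinding both sides reduces, after an application of Fubini and the equivariance of the Haar systems $\set{\lambda^u}$, $\set{\beta_K^u}$, $\set{\alpha^u}$, to the identity that integration over $G$ relative to $s(z)\in\ho$ factors as integration over the $K$-fibre followed by integration over $H$ inside $K$ --- essentially the compatibility of the three Haar systems with the fibration $G\supset K\supset H$. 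This factorization is the real content, and it is where the separability/disintegration hypotheses ultimately do their work, since the Haar systems on the imprimitivity groupoids were themselves produced via \cite{kmrw:ajm98}*{Proposition~5.2}.

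I expect the main obstacle to be the bookkeeping needed to make the map $\Psi$ well defined and inner-product preserving through the internal tensor product balancing relation, rather than any single deep idea. Concretely, I must check that $\Psi$ respects the $\cs(K)$-balancing, i.e.\ that $\Psi(\psi\cdot g\tensor\phi)=\Psi(\psi\tensor g\cdot\phi)$ for $g\in C_c(K)$, so that it descends to the balanced tensor product; this is a change-of-variables identity for the $K$-action. Once $\Psi$ is shown to preserve the $\cs(H)$-valued inner products and to have dense range, it extends to an isometry of Hilbert modules with dense range, hence a unitary isomorphism $\X_H^G\cong\X_K^G\tensor_{\cs(K)}\X_H^K$. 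It remains only to confirm that $\Psi$ intertwines the left $\cs(G)$-actions --- which is immediate from the associativity of the convolution \eqref{eq:10} --- so that the isomorphism is one of $\cs(G)\sme\cs(H)$-bimodules. Invoking \cite{rw:morita}*{Theorem~3.29} to commute induction past the internal tensor product then yields the equivalence of $\Ind_H^G L$ and $\Ind_K^G\bigl(\Ind_H^K L\bigr)$, completing the argument.
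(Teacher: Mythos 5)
Your proposal is correct and follows essentially the same route as the paper: the map $\Psi$ you write down is exactly the map $\theta$ of Lemma~\ref{lem-rieffel-tech}, and the paper likewise verifies that it preserves the $\cs(H)$-valued inner products by a Fubini/invariance computation, establishes dense range via an approximate identity from \cite{mrw:jot87}*{Proposition~2.10}, checks the intertwining of the left $\cs(G)$-action through associativity of convolution, and then invokes the standard associativity of internal tensor products to conclude. The only cosmetic difference is that the paper factors the argument through an explicit unitary $V=\theta\circ U$ on the iterated tensor product rather than phrasing it as an imprimitivity-bimodule isomorphism, but the content is identical.
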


\section{The Main Theorem}
\label{sec:main-theorem}

\begin{thm}
  \label{thm-main}
  Let $G$ be a second countable groupoid with Haar system
  $\set{\lambda^{u}}_{u\in\go}$.  Suppose that $L$ is an irreducible
  representation of the stability group $G(u)$ at $u\in\go$.  Then
  $\indgug L$ is an irreducible representation of $\cs(G)$.
\end{thm}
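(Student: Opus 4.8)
The plan is to exploit the factorisation recorded in Remark~\ref{rem-composition}. Set $\pi:=\xind L$, the representation of $\cs(H^{G})$ induced from $L$ through the imprimitivity bimodule $\X=\X_{G(u)}^{G}$, and let $\overline{\pi}$ be its canonical extension to $\mathcal L(\X)=M(\cs(H^{G}))$, so that $\pi$ and $\indgug L=\overline{\pi}\circ V$ act on the \emph{same} Hilbert space $\Hind$. Since $G(u)$ has unit space $\set{u}$, the module $\X$ is a genuine $\cs(H^{G})\sme\cs(G(u))$-imprimitivity bimodule, and the Rieffel correspondence carries the irreducible $L$ to an irreducible representation $\pi$ of $\cs(H^{G})$ (see \cite{rw:morita}); in particular $\pi(\cs(H^{G}))'$ consists only of scalars. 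It therefore suffices to prove the equality of von Neumann algebras
\[
  \indgug L(\cs(G))''=\pi(\cs(H^{G}))'',
\]
for then $\indgug L(\cs(G))'=\pi(\cs(H^{G}))'$ reduces to the scalars, which is precisely irreducibility. One inclusion is automatic: $V$ maps into $M(\cs(H^{G}))$ and $\pi$ is nondegenerate, so $\overline{\pi}(M(\cs(H^{G})))''=\pi(\cs(H^{G}))''$, whence $\indgug L(\cs(G))''\subseteq\pi(\cs(H^{G}))''$. Everything thus reduces to showing that $\pi(\cs(H^{G}))$ lies in the weak operator closure of $\indgug L(\cs(G))$.

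The second step is to view both families of operators as convolution against a kernel. As $G(u)$ has unit space $\set{u}$, we have $\GsH=s^{-1}(u)$, and $[z,y]\mapsto zy^{-1}$ is a homeomorphism of $H^{G}$ onto the reduction $G|_{[u]}$ of $G$ to the orbit $[u]:=r(\GsH)$ of $u$. Under this map $F\in C_{c}(H^{G})$ corresponds to $\widetilde F\in C_{c}(G|_{[u]})$ with $F([z,y])=\widetilde F(zy^{-1})$, and a change of variables in \eqref{eq:1} and \eqref{eq:10} shows that, on the dense subspace $C_{c}(\GsH)\atensor\H_{L}$ of $\Hind$, both $\pi(F)$ and $\indgug L(f)$ have the form
\[
  \xi\tensor_{H}h\longmapsto\Bigl(z\mapsto\int_{G}k(zy^{-1})\,\xi(y)\,d\lambda_{u}(y)\Bigr)\tensor_{H}h,
\]
with $k=\widetilde F$ in the first case and $k=f$ in the second. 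In particular $\indgug L(f)$ depends on $f$ only through its restriction to $G|_{[u]}$, and $\indgug L(f)=\pi(F)$ whenever $f=\widetilde F$ on the arrows that actually occur.

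It remains to realise the generators $\pi(F)$, $F=\Lip<\phi,\psi>$ with $\phi,\psi\in C_{c}(\GsH)$ --- which densely span $\pi(\cs(H^{G}))$ --- as weak limits of the $\indgug L(f)$. Fix finitely many elementary tensors $\xi_{j}\tensor_{H}h_{j}$ and $\eta_{l}\tensor_{H}k_{l}$ with $\xi_{j},\eta_{l}\in C_{c}(\GsH)$. By \eqref{eq:3}, the matrix coefficient
\begin{multline*}
  \ip(\indgug L(f)(\xi_{j}\tensor_{H}h_{j})|\eta_{l}\tensor_{H}k_{l})\\
   =\bip(L\bigl(\Rip<\eta_{l},f\cdot\xi_{j}>\bigr)h_{j}|k_{l})
\end{multline*}
depends on $f$ only through its values on a compact subset of $G|_{[u]}$ determined by the (fixed) supports of the $\xi_{j},\eta_{l}$ and by $L$ --- and symmetrically $\pi(F)$ involves only the corresponding values of $\widetilde F$. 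Choosing, by Tietze's theorem, an $f\in C_{c}(G)$ that agrees with $\widetilde F$ on a large enough compact subset of $G|_{[u]}$ makes all of these finitely many matrix coefficients agree exactly. Hence $\pi(F)$ lies in the weak operator closure $\indgug L(\cs(G))''$, and letting $\phi,\psi$ vary yields $\pi(\cs(H^{G}))\subseteq\indgug L(\cs(G))''$, which finishes the proof.

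The one genuinely delicate point is this final support bookkeeping. One must first fix a compact support for the approximating $f$, so that the arrows feeding the matrix coefficients range over a single compact set $C\subseteq G|_{[u]}$ that is independent of $f$; one then extends $\widetilde F|_{C}$ to a member of $C_{c}(G)$, which is legitimate because $C$, being compact, is closed in the Hausdorff groupoid $G$, so Tietze's theorem applies. It is exactly here that the weak, rather than the norm, closure is forced: the orbit $[u]$ need not be closed in $\go$, so in general there is no restriction homomorphism $\cs(G)\to\cs(H^{G})=\cs(G|_{[u]})$, and only the finitely many values of $f$ seen by a given finite set of vectors can be prescribed at once.
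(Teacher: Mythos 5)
Your overall strategy is essentially the paper's: reduce the irreducibility of $\indgug L$ to the (Rieffel-correspondence) irreducibility of $\pi=\xind L$ by showing that the operators $\pi(F)$, $F\in C_{c}\bigl(G(u)^{G}\bigr)$, lie in the weak operator closure of $(\indgug L)\bigl(C_{c}(G)\bigr)$, and do this by matching the kernel $f(zy^{-1})$ with $F\bigl([z,y]\bigr)$ on large compact subsets of $G_{u}\times G_{u}$. The reduction via Remark~\ref{rem-composition}, the identification of both families as kernel operators, and the observation that one must first pin down a fixed compact support for the approximants are all correct and parallel the paper's argument.

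The gap is in the sentence ``Hence $\pi(F)$ lies in the weak operator closure.''  What you have actually shown is that for each \emph{finite} collection of elementary tensors with legs in $C_{c}(\GsH)$ there is an $f$ whose matrix coefficients against those vectors agree with those of $\pi(F)$.  Membership in the weak operator closure requires approximating matrix coefficients against \emph{arbitrary} vectors of $\Hind$ (equivalently, if $T$ commutes with $(\indgug L)\bigl(C_{c}(G)\bigr)$, the vectors $T^{*}\eta$ against which you must test do not lie in your dense subspace).  Passing from the dense subspace to all of $\Hind$ requires a bound on $\|(\indgug L)(f)\|$ that is \emph{uniform} over the approximating family; without it the inference fails --- it is only for uniformly bounded nets that agreement of matrix coefficients on a dense subspace yields WOT convergence.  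Nothing in your construction supplies this: Tietze extension controls the sup norm of $f$, but you never convert that into a bound on $\|f\|_{I}$ and hence on the operator norm.  This is precisely the content of condition (3) of the paper's Lemma~\ref{lem-key} ($\|f_{K}\|_{I}\le\|F\|_{I}+1$) together with the auxiliary Lemma~\ref{lem-bdd}, which constitute the technical heart of the proof.  The gap is fillable along the lines you have set up --- an extension with $\|f\|_{\infty}\le\|F\|_{\infty}$ and $\supp f$ contained in a fixed compact set $C$ satisfies $\|f\|_{I}\le\|F\|_{\infty}\cdot\sup_{w}\max\bigl(\lambda^{w}(C),\lambda_{w}(C)\bigr)<\infty$ --- but as written the decisive step is missing.
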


The idea of the proof is straightforward.  Let $L$ be an irreducible
representation of $\cs\bigl(G(u)\bigr)$.  Since $\X$ is a
$\cs\bigl(G(u)^{G}\bigr)\sme \cs\bigl(G(u)\bigr)$-\ib,
\cite{rw:morita}*{Corollary~3.32} implies that $\xind L$ is an
irreducible representation of $\cs\bigl(G(u)^{H}\bigr)$.  
We will show that any $T$ in the commutant of $\indgug L$ is a
scalar multiple of the identity.  It will suffice to see
that any such $T$ commutes with $(\xind L)(F)$ for all $F\in
C_{c}\bigl(G(u)^{G}\bigr)$.  Our proof will consist in
producing, given $F$, a net $\set{f_{i}}$ in $C_{c}(G)$ such that
\begin{equation*}
  (\indgug L)(f_{i})\to (\xind L)(F)
\end{equation*}
in the weak operator topology.  Since we will also arrange that this
net is uniformly bounded in the $\|\cdot\|_{I}$-norm on $C_{c}(G)$ ---
so that the net $\set{(\indgug L)(f_{i})}$ is uniformly bounded in
$B\bigl(\Hind)\bigr)$ --- we just have to arrange that
\begin{equation*}
  \bip((\indgug L)(f_{i})(\phi\tensor_{G(u)}h) |
  \psi\tensor_{G(u)}k) \to \bip((\xind L)(F) (\phi\tensor_{G(u)}h) |
  \psi\tensor_{G(u)}k)
\end{equation*}
for all $\phi,\psi\in C_{c}(G_{u})$ and $h,k\in\H_{L}$.

The next lemma is the essential ingredient to our proof.

\begin{lemma}
  \label{lem-key}
  Suppose that $F\in C_{c}\bigl(G(u)^{G}\bigr)$.  Then there is
  compact set $C_{F}$ in $G$ such that for each compact set $K\subset
  G_{u}$ there is a $f_{K}\in C_{c}(G)$ such that
  \begin{enumerate}
  \item $f_{K}(zy^{-1})=F\bigl([z,y]\bigr)$ for all $(z,y)\in K\times
    K$,
  \item $\supp f_{K} \subset C_{F}$ and
  \item $\|f_{K}\|_{I}\le \|F\|_{I}+1$.
  \end{enumerate}
\end{lemma}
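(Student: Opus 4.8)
The plan is to exhibit the desired functions as cut-down extensions of a single ``ideal'' function attached to $F$, and then to fight for the $I$-norm estimate. First I would check that the map $\iota\colon G(u)^{G}\to G$, $\iota([x,y]):=xy^{-1}$, is well defined: if $[x,y]=[x',y']$ then $(x',y')=(xh,yh)$ for some $h\in G(u)$, whence $x'y'^{-1}=xhh^{-1}y^{-1}=xy^{-1}$. It is visibly continuous and a groupoid homomorphism, and it is injective, since $xy^{-1}=x'y'^{-1}$ with $x,y,x',y'\in G_u$ forces $x'^{-1}x=y'^{-1}y=:h\in G(u)$ and hence $[x,y]=[x',y']$. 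Consequently $g:=F\circ\iota^{-1}$ is a well-defined function on the image $\Omega:=\iota\bigl(G(u)^{G}\bigr)$ which vanishes off the compact set $S:=\iota(\supp F)$. Because a continuous bijection of a compact Hausdorff space is a homeomorphism, $g$ is continuous on every compact subset of $\Omega$; in particular, for compact $K\subseteq G_u$ the set $A_{K}:=\{xy^{-1}:(x,y)\in K\times K\}=\iota\bigl(\{[x,y]:(x,y)\in K\times K\}\bigr)$ is compact and $g|_{A_{K}}$ is continuous, with $g(xy^{-1})=F([x,y])$ --- precisely the prescription in (1).

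The quantitative input is a change of variables. Using $d\lambda_u(y)=d\lambda^u(y^{-1})$ and the left invariance of the Haar system, for $x\in G_u$ one has
\[
  \int_{G_u}\Phi(xy^{-1})\,d\lambda_u(y)=\int_G\Phi(x\eta)\,d\lambda^u(\eta)=\int_G\Phi(\eta)\,d\lambda^{r(x)}(\eta).
\]
Taking $\Phi=|g|$ and recalling the defining formula for $\beta$, the left-hand side equals $\int_{G(u)^{G}}|F|\,d\beta^{x\cdot G(u)}\le\|F\|_{I}$, so that $\int_G|g|\,d\lambda^{v}\le\|F\|_{I}$ for every $v\in\go$ (the integral being $0$ unless $v$ lies in the orbit of $u$); a symmetric computation, integrating in the first variable, gives $\int_G|g|\,d\lambda_{v}\le\|F\|_{I}$. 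Thus the ideal function $g$ already has the correct $I$-norm; the difficulty is that $g$ lives on the generally non-locally-closed set $\Omega$, so I must replace it by a genuine element of $C_{c}(G)$ that still agrees with it on $A_{K}$.

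For the construction I would fix once and for all a $\chi\in C_{c}(G)$ with $0\le\chi\le1$ and $\chi\equiv1$ on $S$, and set $C_{F}:=\supp\chi$, which is the compact set of (2). Given $K$, I would use Tietze to pick $\hat g\in C_{c}(G)$ with $\hat g|_{A_{K}}=g|_{A_{K}}$ and $\|\hat g\|_{\infty}\le\|F\|_{\infty}$, and (as $G$ is metrizable) a decreasing sequence $\chi_{n}\in C_{c}(G)$ with $0\le\chi_{n}\le1$, $\chi_{n}\equiv1$ on $A_{K}$, and $\supp\chi_{n}$ shrinking to $A_{K}$, then put $f_{K}:=\chi\,\chi_{n}\,\hat g$ for a suitable $n$. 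On $A_{K}$ we have $\chi_{n}=1$ and, where $g\neq0$, also $\chi=1$ (there $xy^{-1}\in S$), so $f_{K}=g$ on $A_{K}$, giving (1); and $\supp f_{K}\subseteq\supp\chi=C_{F}$, giving (2).

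The one real obstacle is choosing $n$ so that (3) holds, i.e.\ controlling the excess fiber mass \emph{uniformly} in $v$. Consider $\Phi_{n}(v):=\int_{G^{v}}\chi\chi_{n}|\hat g|\,d\lambda^{v}$. Each $\Phi_{n}$ is continuous on $\go$ (continuity of the Haar system on $C_{c}(G)$), the $\Phi_{n}$ decrease in $n$, they are all supported in the fixed compact set $r(C_{F})$, and by dominated convergence $\Phi_{n}(v)\downarrow\int_{A_{K}\cap G^{v}}\chi|g|\,d\lambda^{v}\le\|F\|_{I}$ for each $v$. A Dini-type compactness argument then produces $n$ with $\sup_{v}\Phi_{n}(v)\le\|F\|_{I}+1$: if not, pick $v_{n}$ with $\Phi_{n}(v_{n})>\|F\|_{I}+1$, pass to $v_{n}\to v_{*}$ in $r(C_{F})$, and use monotonicity together with continuity to force the pointwise limit at $v_{*}$ to exceed $\|F\|_{I}$, a contradiction. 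The identical argument for $\Psi_{n}(v):=\int_{G_{v}}\chi\chi_{n}|\hat g|\,d\lambda_{v}$ (using the second change of variables) lets me enlarge $n$ so both suprema are $\le\|F\|_{I}+1$. Since $\int_{G^{v}}|f_{K}|\,d\lambda^{v}\le\Phi_{n}(v)$ and $\int_{G_{v}}|f_{K}|\,d\lambda_{v}\le\Psi_{n}(v)$, this is exactly $\|f_{K}\|_{I}\le\|F\|_{I}+1$. The delicate point throughout is this uniform-in-$v$ control, forced on us because the non-closedness of $\Omega$ rules out taking $f_{K}$ to be an exact extension of $g$.
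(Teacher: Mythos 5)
Your proposal is correct and follows essentially the same route as the paper: the same injection $[z,y]\mapsto zy^{-1}$, the same $I$-norm bound for the ``ideal'' function via left invariance and the formula for the Haar system $\beta$ on $G(u)^{G}$, and the same uniform-in-$v$ control obtained by shrinking the support toward the compact set $\{zy^{-1}:z,y\in K\}$ and running a sequential compactness/contradiction argument using continuity of the Haar system. The only cosmetic difference is that you implement the last step with continuous cutoffs $\chi_{n}$ and a Dini-type argument, whereas the paper isolates it as a separate lemma phrased with shrinking neighborhoods $V_{n}$ and then cuts the extension down to a function supported in a suitable symmetric $V$.
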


The proof of Lemma~\ref{lem-key} is a bit technical, so we'll postpone
the proof for a bit, and show that the lemma allows us to prove
Theorem~\ref{thm-main}.

\begin{proof}[Proof of Theorem~\ref{thm-main}]
  For each $K\subset G_{u}$, let $f_{K}$ be as in
  Lemma~\ref{lem-key}.  Then $\set{f_{K}}$ and $\set{(\indgug
    L)(f_{K})}$ are nets indexed by increasing $K$. Notice that
  \begin{multline}\label{eq:9}
    \bip((\indgug L)(f_{K})(\phi\tensor_{G(u)}h) |
  \psi\tensor_{G(u)}k) -{}\\ \bip((\xind L)(F) (\phi\tensor_{G(u)}h) |
  \psi\tensor_{G(u)}k) \\
= \bip( L\bigl(\Rip<\psi,f_{K}*\phi-F\cdot \phi>\bigr)h|k)
  \end{multline}
Furthermore, using the invariance of the Haar system on $G$, we can
compute as follows:
\begin{equation}
  \label{eq:7}
  \begin{split}
    \Rip<\psi,f_{K}*\phi>(s)&= \int_{G}\overline{\psi(x)}
    f_{K}*\phi(xs) \, d\lambda_{u}(x) \\
&= \int_{G}\int_{G} \overline{\psi(x)} f_{K}(xz^{-1})\phi(zs)
\,d\lambda_{u}(z) \,d\lambda_{u}(x),
  \end{split}
\end{equation}
while on the other hand,
\begin{equation}
  \label{eq:8}
  \begin{split}
    \Rip<\psi,F\cdot \phi>(s)&= \int_{G}\overline{\psi(x)} F\cdot
    \phi(xs) \,d\lambda_{u}(x) \\
&=\int_{G}\int_{G} \overline{\psi(x)} F\bigl([xs,z]\bigr) \phi(z)
\,d\lambda_{u}(z) \,d\lambda_{u}(x) \\
&= \int_{G}\int_{G} \overline{\psi(x)} F\bigl([x,zs^{-1}]\bigr) \phi(z)
\,d\lambda_{u}(z) \,d\lambda_{u}(x) \\
&= \int_{G}\int_{G} \overline{\psi(x)} F\bigl([x,z]\bigr) \phi(z)
\,d\lambda_{u}(zs) \,d\lambda_{u}(x)
  \end{split}
\end{equation}

Notice that $\supp\Rip<\psi,\phi>\subset (\supp \psi)(\supp\phi)$.
Since $\supp f_{K}\subset C_{F}$ for all $K$, we have
\begin{equation*}
  \supp f_{K}*\phi\subset (\supp f_{K})(\supp \phi)\subset
C_{F}(\supp\phi).
\end{equation*}
  Therefore if \eqref{eq:7} does not vanish, then we
must have $s\in (\supp\psi)C_{F}(\supp \phi)$.  Therefore there is a
compact set $K_{0}$ --- which does \emph{not} depend on $K$ --- such
that both \eqref{eq:7} and \eqref{eq:8} vanish if $s\notin K_{0}$.
Thus if $s\in K_{0}$ and if $K \supset (\supp\psi)\cup (\supp
\phi)K_{0}^{-1}$, then the integrand in \eqref{eq:7} is either zero or
we must have $(x,z)\in K\times K$.  Therefore we can replace
$f_{K}(xz^{-1})$ by $F\bigl([x,z]\bigr)$ and $f_{K}*\phi-F\cdot\phi$
is the zero function  whenever $K$ contains $(\supp\psi)\cup (\supp
\phi)K_{0}^{-1}$.  Therefore the left-hand side of
\eqref{eq:9} is eventually zero, and the
theorem follows.
\end{proof}

We still need to prove Lemma~\ref{lem-key}, and to do that, we need some
preliminaries.  In the sequel, if $S$ is a Borel subset of $G$, then
\begin{equation*}
  \int_{S}f(x)\,d\lambda^{u}(x):=\int_{G}\charfcn{S}(x)f(x)\,d\lambda^{u}(x)
\end{equation*}
where $\charfcn{S}$ is the characteristic function of $S$.

\begin{lemma}
  \label{lem-bdd}
  Suppose that $f\in C_{c}^{+}(G)$ and that $K\subset G$ is a compact
  set such that
  \begin{equation*}
    \int_{K}f(x)\,d\lambda^{u}(x)\le M\quad\text{for all $u\in\go$.}
  \end{equation*}
There there is a neighborhood $V$ of $K$ such that
\begin{equation*}
  \int_{V}f(x)\,d\lambda^{u}(x)\le M+1\quad\text{for all $u\in\go$.}
\end{equation*}
\end{lemma}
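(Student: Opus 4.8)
The plan is to approximate the closed set $K$ from outside by a monotone sequence of continuous cutoffs, so that the integrals over the resulting open neighborhoods are controlled by integrals of genuine $C_{c}(G)$-functions, and then to upgrade the obvious pointwise convergence into a single uniform bound by a Dini-type argument. The point to keep in mind is that $\charfcn{K}$ is only upper semicontinuous, so $u\mapsto\int_{K}f\,d\lambda^{u}$ need not be continuous; working with continuous cutoffs above $\charfcn{K}$ avoids this.

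Concretely, since $G$ is second countable, locally compact and Hausdorff, it is metrizable; fix a metric $d$. For all $n$ large enough that the closed $1/n$-neighborhood of $K$ is compact (such $n$ exist because $K$ is compact and $G$ is locally compact), set $U_{n}:=\set{x\in G:d(x,K)<1/n}$ and choose $g_{n}\in C_{c}^{+}(G)$ with $0\le g_{n}\le1$, $g_{n}\equiv1$ on $U_{n+1}$, and $\supp g_{n}\subset\overline{U_{n}}$; for instance take $g_{n}(x)=\phi_{n}\bigl(d(x,K)\bigr)$ for a piecewise-linear $\phi_{n}\colon[0,\infty)\to[0,1]$ equal to $1$ on $[0,1/(n+1)]$ and $0$ on $[1/n,\infty)$. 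A direct check shows $g_{n}\downarrow\charfcn{K}$ pointwise. Fixing the first admissible index $n_{1}$ and putting $L:=\overline{U_{n_{1}}}\cap\supp f$ (a compact set), we have $\supp(g_{n}f)\subset L$ for every $n\ge n_{1}$, so the functions $h_{n}(u):=\int_{G}g_{n}f\,d\lambda^{u}$ all vanish off the compact set $r(L)\subset\go$. Each $h_{n}$ is continuous because the Haar system is continuous and $g_{n}f\in C_{c}(G)$, the sequence $h_{n}$ decreases pointwise since $f\ge0$, and for each fixed $u$ dominated convergence (with dominating function $g_{n_{1}}f$) gives $h_{n}(u)\downarrow\int_{K}f\,d\lambda^{u}\le M$.

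It remains to find one index $n_{0}$ with $h_{n_{0}}\le M+1$ everywhere; then $V:=U_{n_{0}+1}$ is an open neighborhood of $K$ with $\charfcn{V}\le g_{n_{0}}$, so $\int_{V}f\,d\lambda^{u}\le h_{n_{0}}(u)\le M+1$ for all $u$, as required. To produce $n_{0}$, suppose no such index exists: for each $n$ there is $u_{n}$ with $h_{n}(u_{n})>M+1$, necessarily $u_{n}\in r(L)$. By compactness of $r(L)$ pass to a subsequence $u_{n_{k}}\to u_{*}$. For a fixed $m$, monotonicity gives $h_{m}(u_{n_{k}})\ge h_{n_{k}}(u_{n_{k}})>M+1$ once $n_{k}\ge m$, so continuity of $h_{m}$ yields $h_{m}(u_{*})\ge M+1$; since this holds for every $m$, letting $m\to\infty$ gives $\int_{K}f\,d\lambda^{u_{*}}=\lim_{m}h_{m}(u_{*})\ge M+1>M$, contradicting the pointwise bound at $u_{*}$.

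The only delicate point is precisely this uniformity over the noncompact base $\go$: the pointwise statement ``$h_{n}(u)$ decreases to something $\le M$'' is immediate, but Dini's theorem does not apply verbatim because the pointwise limit $u\mapsto\int_{K}f\,d\lambda^{u}$ may be discontinuous. The two facts that rescue the argument are that the compact support of $f$ confines every $h_{n}$ to the fixed compact set $r(L)$, and that monotonicity lets one push the limit through the \emph{continuous} functions $h_{m}$ rather than through the possibly discontinuous limit, which is exactly the manoeuvre in the contradiction above.
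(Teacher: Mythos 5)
Your proof is correct and follows essentially the same route as the paper's: argue by contradiction, use compactness of the relevant set of units to extract a convergent sequence $u_{n}\to u_{0}$, and play the continuity of $u\mapsto\int_{G}g\,d\lambda^{u}$ for $g\in C_{c}(G)$ against the assumed lower bounds $\ge M+1$. The only organizational differences are that you package the continuous approximations as a decreasing sequence of cutoffs $g_{n}\ge\charfcn{K}$ and run a Dini-type monotonicity argument, whereas the paper works with a countable fundamental system of neighborhoods (second countability rather than metrizability) and builds a single auxiliary function $f_{0}\le f$ sandwiched between a neighborhood $W$ of $K$ and a fixed $V_{n_{1}}$ to reach the same contradiction.
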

\begin{proof}
  Let $K_{1}$ be a compact neighborhood
  of $K$.  Since $G$ is second countable, we can find a countable
  fundamental system $\set{V_{n}}$ of neighborhoods of $K$ in $K_{1}$;
  thus, given any neighborhood $V$ of $K$, there is a $n$ such that
  $V_{n}\subset V$ and 
$K=\bigcap V_{n}$.  Certainly, we can assume that $V_{n+1}\subset
V_{n}$.

If no $V$ as prescribed in the lemma exists, then for each $n$
we can find $u_{n}\in\go$
such that
  \begin{equation*}
    \int_{V_{n}}f(x)\,d\lambda^{u_{n}}(x)\ge M+1.
  \end{equation*}
Since we must have each $u_{n}\in r(K_{1})$, we can pass to a
subsequence, relabel, and assume that $u_{n}\to u_{0}$.  Since
$\charfcn{V_{n}}\to \charfcn{K}$ pointwise, the dominated convergence
theorem implies that
\begin{equation*}
  \int_{V_{n}}f(x)\,d\lambda^{u_{0}}(x)\to\int_{K}f(x)\,d\lambda^{u_{0}}(x).
\end{equation*}
In particular, there is a $n_{1}$ such that 
\begin{equation*}
  \int_{V_{n_{1}}}f(x)\,d\lambda^{u_{0}}(x)\le M+\frac12
\end{equation*}
Let $W$ be an open set such that $K\subset W\subset \overline{W}
\subset V_{n_{1}}$, and let $f_{0}\in C_{c}^{+}(G)$ be such that
$f_{0}\restr{\overline{W}}=f$, $f_{0}\le f$ and $\supp f_{0}\subset
V_{n_{1}}$.  Then
  \begin{equation*}
    \int_{G} f_{0}(x) \,d\lambda^{u_{0}}(x)\le M+\frac 12.
  \end{equation*}
However, since $\set{\lambda^{u}}$ is a Haar system,
\begin{equation*}
  \int_{G}f_{0}(x)\,d\lambda^{u_{n}}(x)\to
  \int_{G}f_{0}(x)\,d\lambda^{u_{0}}(x)\le M+\frac12.
\end{equation*}
But for large $n$, we have $V_{n}\subset W$ and therefore
\begin{equation*}
  \int_{G} f_{0}(x) \,d\lambda^{u_{n}}(x)\ge \int_{V_{n}}f_{0}(x)
  \,d\lambda^{u_{n}}(x) = \int_{V_{n}}f(x) \,d\lambda^{u_{n}}(x) \ge M+1.
\end{equation*}
This leads to a contradiction and completes the proof of the lemma.
\end{proof}

\begin{proof}[Proof of Lemma~\ref{lem-key}]
  The map $(z,y)\mapsto zy^{-1}$ is certainly continuous on
  $G_{u}\times G_{u}$ and factors through the orbit map
  $\pi:G_{u}\times G_{u}\to G(u)^{G}$.  In fact, if $zy^{-1}=xw^{-1}$,
  then we must have $z=x(w^{-1}y)$ and $y=w(x^{-1}z)$.  But
  $w^{-1}y=x^{-1}z$ and lies in $G(u)$.  Therefore, we have a
  well-defined injection $\Pi:G(u)^{G}\to G$ sending $[z,y]$ to
  $zy^{-1}$.  
We let $C_{F}$ be a compact neighborhood of $\Pi(\supp F)$.

Fix a compact set $K\subset G_{u}$.
The restriction of $\Pi$ to the compact set $\pi(K\times
  K)$ is a homeomorphism so we can find a function $\tfk\in C_{c}(G)$
  such that $\supp\tfk\subset C_{F}$ and such that $\tfk(zy^{-1}) =
  F\bigl([z,y]\bigr)$ for all $(z,y)\in K\times K$.

Let $K_{G}:=\pi(K\times K)$.  If
\begin{equation*}
  \int_{K_{G}}|\tfk(y)|\,d\lambda^{w}(y)\not=0,
\end{equation*}
then $K_{G}\cap G^{w}\not=\emptyset$.  Thus there is a $z\in K$ such
that $r(z)=w$ (and $s(z)=u$).  Then by left invariance
\begin{align*}
  \int_{K_{G}}|\tfk(y)|\,d\lambda^{w}(y)&=
  \int_{G}\charfcn{K_{G}}(zy)|\tfk(zy)|\, d\lambda^{u}(y) \\
&= \int_{G}\charfcn{K_{G}}(zy^{-1})|\tfk(zy^{-1})|\, d\lambda_{u}(y)\\
&= \int_{G} \charfcn{K_{G}}(zy^{-1}) |F\bigl([z,y]\bigr)| \,d\lambda_{u}(y) \\
&\le \|F\|_{I}.
\end{align*}
Similarly, if
\begin{equation*}
  \int_{K_{G}}|\tfk(y^{-1})| \,d\lambda^{w}(y) \not=0,
\end{equation*}
then as before there is a $z\in K$ such that $r(z)=w$ and
\begin{align*}
  \int_{K_{G}}|\tfk(y^{-1})|\,d\lambda^{w}(y) &=
  \int_{G}\charfcn{K_{G}} (zy) |\tfk(y^{-1}z^{-1})| \,d \lambda^{u}(y)
  \\
&= \int_{G}\charfcn{K_{G}}(zy^{-1})| \tfk(yz^{-1})| \,d\lambda_{u}(y)
\\
\intertext{which, since $K_{G}^{-1}=K_{G}$, is}
&= \int_{G}\charfcn{K_{G}}(yz^{-1}) |\tfk(yz^{-1})| \,d\lambda_{u}(y)
\\
&\le \int_{G}|F\bigl([y,z]\bigr)| \,d\lambda_{u}(y) \\
&=\int_{G(u)^{G}}|F\bigl([z,y]^{-1}\bigr) \,d\beta^{z\cdot
  G(u)}\bigl([z,y]\bigr) \\
&\le \|F\|_{I}.
\end{align*}
Using Lemma~\ref{lem-bdd}, we can find a neighborhood $V$ of $K_{G}$
contained in $C_{F}$ such that both
\begin{equation*}
  \int_{V}|\tfk(x)|\,d\lambda^{w}(x) \quad\text{and}\quad
\int_{V} |\tfk(x^{-1})|\,d\lambda^{w}(x)
\end{equation*}
are bounded by $\|F\|_{I}+1$ for all $w\in\go$.  Since $K_{G}$ is
symmetric, we can assume that $V=V^{-1}$ as well.  We can now let
$f_{K}$ be any element of $C_{c}(G)$ such that $f_{K}=\tfk$ on
$K_{G}$, $\supp f_{K}\subset V$ and $f_{K}\le\tfk$ everywhere.
Then $\|f\|_{I}\le\|F\|+1$, $\supp f_{K}\subset C_{F}$ and
$f_{K}(yz^{-1})= F\bigl([z,y]\bigr)$ for all $(z,y)\in K\times K$.
This completes the proof of the lemma.
\end{proof}

\begin{example}[Holonomy Groupoid]
  \label{ex-holonomy}
  Let $(V,\mathcal{F})$ be a $C^{\infty}$ compact foliated manifold,
  and let $G$ be its holonomy groupoid
  \citelist{\cite{win:agag83}\cite{hae:a84}} 
  equipped with its usual locally compact topology as in
  \cite{con:pspm80}.  Naturally, we also assume that $G$ is Hausdorff
  so that our results apply.  The stability groups $G(x)$ for $x\in V$
  are the holonomy groups for the foliation.  Using
  Theorem~\ref{thm-main}, each irreducible
  representation $\sigma_{x}$ of $G(x)$ provides an irreducible
  representation $\Ind_{G(x)}^{G}\sigma_{x}$ of $\cs(G)$.  This
  representation is equivalent to the representations
  $\Ind_{x}\sigma_{x}$ treated in \cite{facska:jot82}*{\S5}.
  Thus we recover a part of \cite{facska:jot82}*{Corollaire~5.7}.
\end{example}

\begin{example}
  \label{ex-deaconu}
  If $\sigma:X\to X$ is a covering map for a compact Hausdorff space
  $X$, then the associated \emph{Deaconu-Renualt groupoid}
  \cite{dea:tams95} is
  \begin{equation*}
    G:=\set{(z,n-l,w)\in X\times \Z\times
      X:\sigma^{l}(z)=\sigma^{n}(w)}. 
  \end{equation*}
More concretely, we can let $X$ be the circle $\T$ and
$\sigma(z):=z^{2}$.  Then the stability group $G(z)$ at $(z,0,z)$ is
trivial unless $z=1$ or $z$ is a primitive $2^{n}$-th root of unity.
Then $G(1)=\set{1,k,1):k\in\Z}$ and if $z$ is a primitive $2^{n}$-th
root of unity, then $G(z)=\set{(z,nk,z):k\in \Z}$.  Now applying
Theorem~\ref{thm-main}, we see that if $z$ is not a primitive root,
$\Ind_{G(z)}^{G}\delta(z,0,z)$ is an irreducible regular
representation.  If $z$ is a primitive $2^{n}$-th root of unity, then
for each $\omega\in \T\cong \widehat\Z$, we obtain an irreducible
representation $\Ind_{G(z)}^{G}\omega$. 
\end{example}

\section{Proof of Theorem~\ref{thm-stages}}

We let $\lambda$, $\beta$ and $\alpha$ be Haar systems on $G$, $K$ and
$H$, respectively. 
It will be helpful to notice that the space $\Hind$ of $\indhg L$ is
an internal tensor product $\X_{H}^{G}\tensor_{H}\H_{L}$ for the
appropriate actions of $\cs(H)$.\footnote{Internal tensor products of
  Hilbert modules are discussed in
  \citelist{\cite{lan:hilbert}\cite{wil:crossed}*{App.~I}}.} Thus the
space of $\Ind_{K}^{G}\bigl(\indgh L\bigr)$ is
$\X_{K}^{G}\tensor_{K}(\X_{H}^{K}\tensor_{H}\H_{L})$.  Of course, the
natural map on the algebraic tensor products induces an isomorphism
$U$ of $\X_{K}^{G}\tensor_{K}(\X_{H}^{K}\tensor_{H}\H_{L})$ with
$(\X_{K}^{G}\tensor_{K}\X_{H}^{K)}\tensor_{H}\H_{L}$ (see
\cite{wil:crossed}*{Lemma~I.6}).  
We need to combine this with the following observation.
\begin{lemma}
  \label{lem-rieffel-tech}
  The map sending $\phi\atensor\psi\in C_{c}(G_{K^{(0)}})\atensor
  C_{c}(K_{\ho})$ to $\theta(\phi\tensor\psi)$ in $C_{c}(G_{\ho})$,
  given by
  \begin{equation*}
    \theta(\phi\tensor\psi)(x):=
\int_{K}\phi(xk)\psi(k^{-1})\,d\beta^{s(x)}(k) ,
  \end{equation*}
induces an isomorphism, also called $\theta$, of $\X_{K}^{G}\tensor
\X_{H}^{K}$ onto $\X_{H}^{G}$.
\end{lemma}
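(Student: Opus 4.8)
The plan is to prove that $\theta$ is isometric for the relevant $\cs(H)$-valued inner products, that it is right $\cs(H)$-linear, and that it has dense range; a linear, inner-product-preserving surjection of right Hilbert $\cs(H)$-modules is then the desired isomorphism. Throughout, recall that $\X_K^G\tensor\X_H^K$ is the internal tensor product over $\cs(K)$, where $\cs(K)$ acts on the left of $\X_H^K$ through the disintegration homomorphism $V_K\colon\cs(K)\to\mathcal{L}(\X_H^K)$ defined exactly as $V$ was in \eqref{eq:10}, but with $(K,\beta)$ in place of $(G,\lambda)$. On elementary tensors the inner product is therefore
\[
  \langle\phi_1\tensor\psi_1,\phi_2\tensor\psi_2\rangle_{\cs(H)}
  =\bigl\langle\psi_1, V_K\bigl(\langle\phi_1,\phi_2\rangle_{\cs(K)}\bigr)\psi_2\bigr\rangle_{\cs(H)},
\]
where $\langle\cdot,\cdot\rangle_{\cs(K)}$ is the $\X_K^G$-inner product and $\langle\cdot,\cdot\rangle_{\cs(H)}$ the $\X_H^K$-inner product, each given by the analogue of \eqref{eq:3}.

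First I would check that $\theta$ is well defined into $C_c(G_{\ho})$: if $\theta(\phi\tensor\psi)(x)\neq0$ then there is a $k\in K$ with $r(k)=s(x)$, $\phi(xk)\neq0$, and $\psi(k^{-1})\neq0$; since $\psi\in C_c(K_{\ho})$ forces $r(k)\in\ho$, we get $s(x)\in\ho$, so the image lies in $C_c(G_{\ho})$, and continuity and compact support are routine. Right $\cs(H)$-linearity, i.e.\ $\theta(\phi\tensor(\psi\cdot g))=\theta(\phi\tensor\psi)\cdot g$ for $g\in C_c(H)$, follows from \eqref{eq:2}, Fubini, and the invariance of $\alpha$.

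The heart of the proof, and the step I expect to be the main obstacle, is the isometry identity
\[
  \bigl\langle\theta(\phi_1\tensor\psi_1),\theta(\phi_2\tensor\psi_2)\bigr\rangle_{\cs(H)}
  =\langle\phi_1\tensor\psi_1,\phi_2\tensor\psi_2\rangle_{\cs(H)}
\]
in $C_c(H)$. Both sides unwind, via \eqref{eq:3}, \eqref{eq:10}, and the definition of $\theta$, into triple integrals: the left-hand side over $x\in G$ with $s(x)=r(h)$ and $k,k'\in K$, and the right-hand side over $z,w\in K$ and $y\in G$. I would match them by Fubini together with the substitutions $z:=k^{-1}$, $y:=xk$ (so $x=yz$), and $w:=zhk'$, using the inversion relation $\beta^{u}\leftrightarrow\beta_{u}$ for the first, the right invariance of $\lambda$ for the second, and the left invariance of $\beta$ for the third; a short groupoid computation shows $w^{-1}zh=k'^{-1}$, so the integrands coincide. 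The difficulty here is purely bookkeeping: keeping the three Haar systems $\lambda$, $\beta$ (and implicitly $\alpha$) and the fibred domains straight while choosing the changes of variable that decouple the three integrations. Once this identity holds, the null space of the pre-inner product on the algebraic tensor product --- which contains the $\cs(K)$-balancing vectors $(\phi\cdot g)\tensor\psi-\phi\tensor(g\cdot\psi)$ --- is carried to $0$ in $\X_H^G$, so $\theta$ descends to the completed internal tensor product and extends to an isometry $\X_K^G\tensor\X_H^K\to\X_H^G$.

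Finally I would establish density of the range. Given $\xi\in C_c(G_{\ho})$, since $\ho$ is closed in $K^{(0)}$ the set $G_{\ho}$ is closed in $G_{K^{(0)}}$, so $\xi$ extends to some $\phi\in C_c(G_{K^{(0)}})$; after the substitution $k\mapsto k^{-1}$ one has $\theta(\phi\tensor\psi)(x)=\int_K\phi(xk^{-1})\psi(k)\,d\beta_{s(x)}(k)$ for $x\in G_{\ho}$. Choosing nonnegative $\psi_i\in C_c(K_{\ho})$ supported in shrinking neighbourhoods of $\ho$ with $\int_K\psi_i(k)\,d\beta_{s(x)}(k)\to1$ uniformly on compacta --- a standard approximate-unit construction for the Haar system $\beta$ --- gives $\theta(\phi\tensor\psi_i)\to\xi$ in the inductive-limit topology, hence in $\X_H^G$. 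Thus $\theta$ is a right $\cs(H)$-linear isometry with dense range, so it is a unitary isomorphism of right Hilbert $\cs(H)$-modules; a final, shorter computation of the same type (amounting to the associativity $f*(\phi*_{K}\psi)=(f*\phi)*_{K}\psi$ of the convolutions) shows that $\theta$ intertwines the left $\cs(G)$-actions given by $V$, which is what Theorem~\ref{thm-stages} requires.
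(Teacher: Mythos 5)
Your proposal is correct and follows essentially the same route as the paper: the isometry is established by unwinding both inner products via \eqref{eq:3} and \eqref{eq:10} and matching the triple integrals with Fubini and invariance of the Haar systems, and density of the range comes from recognizing $\theta(\phi\tensor\psi)$ as $\phi\cdot g_{\psi}$ and applying an approximate identity for the right $C_{c}(K)$-action. The only difference is cosmetic: you construct the approximate identity by hand, where the paper simply invokes \cite{mrw:jot87}*{Proposition~2.10}.
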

\begin{proof}
  The first step is to see that $\theta$ is isometric.  Notice that we
  have three sets of actions and inner products.  We have not tried to
  invent notation to distinguish one from another.  Instead, we will
  hope that it is ``clear from context'' which formula is being
  employed.  In this spirit,
  \begin{align*}
    \Rip<\phi_{1}&\tensor \psi_{1},\phi_{2}\tensor \psi_{2}>(h) =
    \Rip<\psi_{1}, {\Rip<\phi_{1},\phi_{2}>\cdot \psi_{2}}>(h)\\
\intertext{which, using \eqref{eq:3} on $C_{c}(K_{\ho})$, is}
&=\int_{K}\overline{\psi_{1}(k)} \Rip<\phi_{1},\phi_{2}>\cdot \psi_{2}(kh) \,
d\beta_{r(h)}(k) \\
\intertext{which, using \eqref{eq:10} for the $C_{c}(K)$-action on
  $C_{c}(K_{\ho})$, is}
&=
\int_{K}\int_{K} \overline{\psi_{1}(k)}
\Rip<\phi_{1},\phi_{2}>(kk_{1})\psi_{2}
(k_{1}^{-1}k)\,d\beta^{r(h)}(k_{1}) \, d\beta_{r(h) }(k) \\
\intertext{which, using \eqref{eq:3}, is}
&=
\int_{K}\int_{K} \int_{G}\overline{\psi_{1}(k)\phi_{1}(x)} \phi_{2}(xkk_{1})
\psi_{2}
(k_{1}^{-1}k)\, d\lambda_{r(k)}(x) 
\,d\beta^{r(h)}(k_{1}) \, d\beta_{r(h) }(k)\\
&=\int_{K}\int_{K}\int_{G} \overline{\psi_{1}(k)\phi_{1}(xk^{-1})} \phi_{2}(xk_{1})
\psi_{2}
(k_{1}^{-1}k)\, d\lambda_{r(h)}(x) 
\,d\beta^{r(h)}(k_{1}) \, d\beta_{r(h) }(k)\\
\intertext{which, after using Fubini and sending $k_{1}$ to $hk_{1}$,
  is}
& =\int_{G}\overline{\theta(\phi_{1}\tensor\psi_{1})(x)}
\theta(\phi_{2}\tensor \psi_{2})(xh)\,d\lambda_{r(h)}(x) \\
&= \Rip<\theta(\phi_{1}\tensor\psi_{1}),\theta(\phi_{2}\tensor \psi_{2})>.
 \end{align*}
Thus, $\theta$ is isometric.  We just need to see that it has dense
range. 

However, notice that $\theta(\phi\tensor\psi)=\phi\cdot g_{\psi}$ for
the right action on $C_{c}(K)$ on $C_{c}(G_{K^{(0)}})$ with $g_{\psi}$
\emph{any} extension of $\psi$ to $C_{c}(K)$ (see \eqref{eq:2} and
Remark~\ref{rem-convolution}).  It follows from
\cite{mrw:jot87}*{Proposition~2.10} that there is an approximate
identity for $C_{c}(K)$ such that $\phi\cdot g_{i}\to \phi$ in the
inductive limit topology for all $\phi\in C_{c}(_{K^{(0)}})$.  This
implies that the range of $\theta$ is dense, and completes the proof
of the lemma.
\end{proof}

\begin{proof}[Proof of Theorem~\ref{thm-stages}]
  Define a unitary
  $V:\X_{K}^{G}\tensor_{K}(\X_{H}^{K}\tensor_{H}\H_{L})\to
  \X_{H}^{G}\tensor_{H}\H_{L}$ by $V=\theta\circ U$ (where $U$ is
  defined prior to Lemma~\ref{lem-rieffel-tech}).  Then on elementary
  tensors,
  $V\bigl(\phi\tensor(\psi\tensor
    h)\bigr)=\theta(\phi\tensor\psi)\tensor h$.
Then on the one hand,
\begin{equation*}
  V\bigl(\Ind_{K}^{G}\bigl(\Ind_{H}^{K}(L)(f)\bigr)\bigr)
  \bigl(\phi\tensor(\psi\tensor h)\bigr) =
  \theta(f*\phi\tensor\psi)\tensor h.
\end{equation*}
On the other hand, $\theta(f*\phi\tensor\psi)=f*\theta(\phi\tensor
\psi)$.  Therefore
\begin{equation*}
  V(\Ind_{K}^{G}\bigl(\Ind_{H}^{K}(L))=(\indgh L)V.
\end{equation*}
This completes the proof.
\end{proof}


\def\noopsort#1{}\def\cprime{$'$} \def\sp{^}
\begin{bibdiv}
\begin{biblist}

\bib{cla:iumj07}{article}{
      author={Clark, Lisa~Orloff},
       title={C{CR} and {GCR} groupoid {$C\sp *$}-algebras},
        date={2007},
        ISSN={0022-2518},
     journal={Indiana Univ. Math. J.},
      volume={56},
      number={5},
       pages={2087\ndash 2110},
      review={\MR{MR2359724}},
}

\bib{con:pspm80}{incollection}{
      author={Connes, Alain},
       title={A survey of foliations and operator algebras},
        date={1982},
   booktitle={Operator algebras and applications, {P}art {I} ({K}ingston,
  {O}nt., 1980)},
      series={Proc. Sympos. Pure Math.},
      volume={38},
   publisher={Amer. Math. Soc.},
     address={Providence, R.I.},
       pages={521\ndash 628},
      review={\MR{MR679730 (84m:58140)}},
}

\bib{dea:tams95}{article}{
      author={Deaconu, Valentin},
       title={Groupoids associated with endomorphisms},
        date={1995},
        ISSN={0002-9947},
     journal={Trans. Amer. Math. Soc.},
      volume={347},
       pages={1779\ndash 1786},
      review={\MR{95h:46104}},
}

\bib{facska:jot82}{article}{
      author={Fack, T.},
      author={Skandalis, G.},
       title={Sur les repr\'esentations et id\'eaux de la {$C\sp{\ast}
  $}-alg\`ebre d'un feuilletage},
        date={1982},
        ISSN={0379-4024},
     journal={J. Operator Theory},
      volume={8},
      number={1},
       pages={95\ndash 129},
      review={\MR{MR670180 (84d:46101)}},
}

\bib{gli:pjm62}{article}{
      author={Glimm, James},
       title={Families of induced representations},
        date={1962},
        ISSN={0030-8730},
     journal={Pacific J. Math.},
      volume={12},
       pages={885\ndash 911},
      review={\MR{MR0146297 (26 \#3819)}},
}

\bib{hae:a84}{article}{
      author={Haefliger, Andr{\'e}},
       title={Groupo\"\i des d'holonomie et classifiants},
        date={1984},
        ISSN={0303-1179},
     journal={Ast\'erisque},
      number={116},
       pages={70\ndash 97},
        note={Transversal structure of foliations (Toulouse, 1982)},
      review={\MR{MR755163 (86c:57026a)}},
}

\bib{kmrw:ajm98}{article}{
      author={Kumjian, Alexander},
      author={Muhly, Paul~S.},
      author={Renault, Jean~N.},
      author={Williams, Dana~P.},
       title={The {B}rauer group of a locally compact groupoid},
        date={1998},
        ISSN={0002-9327},
     journal={Amer. J. Math.},
      volume={120},
      number={5},
       pages={901\ndash 954},
      review={\MR{2000b:46122}},
}

\bib{lan:hilbert}{book}{
      author={Lance, E.~Christopher},
       title={{H}ilbert {\cs}-modules: A toolkit for operator algebraists},
      series={London Math. Soc. Lecture Note Series},
   publisher={Cambridge Univ. Press},
     address={Cambridge},
        date={1994},
      volume={210},
}

\bib{mac:pnasus49}{article}{
      author={Mackey, George~W.},
       title={Imprimitivity for representations of locally compact groups.
  {I}},
        date={1949},
     journal={Proc. Nat. Acad. Sci. U. S. A.},
      volume={35},
       pages={537\ndash 545},
      review={\MR{MR0031489 (11,158b)}},
}

\bib{mrw:jot87}{article}{
      author={Muhly, Paul~S.},
      author={Renault, Jean~N.},
      author={Williams, Dana~P.},
       title={Equivalence and isomorphism for groupoid {$C^*$}-algebras},
        date={1987},
        ISSN={0379-4024},
     journal={J. Operator Theory},
      volume={17},
      number={1},
       pages={3\ndash 22},
      review={\MR{88h:46123}},
}

\bib{mrw:tams96}{article}{
      author={Muhly, Paul~S.},
      author={Renault, Jean~N.},
      author={Williams, Dana~P.},
       title={Continuous-trace groupoid {\cs}-algebras, {III}},
        date={1996},
     journal={Trans. Amer. Math. Soc.},
       pages={3621\ndash 3641},
}

\bib{muhwil:ms90}{article}{
      author={Muhly, Paul~S.},
      author={Williams, Dana~P.},
       title={Continuous trace groupoid {\cs}-algebras},
        date={1990},
     journal={Math. Scand.},
      volume={66},
       pages={231\ndash 241},
}

\bib{muhwil:ms92}{article}{
      author={Muhly, Paul~S.},
      author={Williams, Dana~P.},
       title={Continuous trace groupoid {\cs}-algebras. {II}},
        date={1992},
     journal={Math. Scand.},
      volume={70},
       pages={127\ndash 145},
}

\bib{muhwil:nyjm08}{book}{
      author={Muhly, Paul~S.},
      author={Williams, Dana~P.},
       title={Renault's equivalence theorem for groupoid crossed products},
      series={NYJM Monographs},
   publisher={State University of New York University at Albany},
     address={Albany, NY},
        date={2008},
      volume={3},
        note={Available at http://nyjm.albany.edu:8000/m/2008/3.htm},
}

\bib{rw:morita}{book}{
      author={Raeburn, Iain},
      author={Williams, Dana~P.},
       title={Morita equivalence and continuous-trace {$C^*$}-algebras},
      series={Mathematical Surveys and Monographs},
   publisher={American Mathematical Society},
     address={Providence, RI},
        date={1998},
      volume={60},
        ISBN={0-8218-0860-5},
      review={\MR{2000c:46108}},
}

\bib{ren:groupoid}{book}{
      author={Renault, Jean},
       title={A groupoid approach to {\cs}-algebras},
      series={Lecture Notes in Mathematics},
   publisher={Springer-Verlag},
     address={New York},
        date={1980},
      volume={793},
}

\bib{ren:jot87}{article}{
      author={Renault, Jean},
       title={Repr\'esentations des produits crois\'es d'alg\`ebres de
  groupo\"\i des},
        date={1987},
     journal={J. Operator Theory},
      volume={18},
       pages={67\ndash 97},
}

\bib{rie:aim74}{article}{
      author={Rieffel, Marc~A.},
       title={Induced representations of {$C^*$}-algebras},
        date={1974},
     journal={Advances in Math.},
      volume={13},
       pages={176\ndash 257},
      review={\MR{50 \#5489}},
}

\bib{wil:tams81}{article}{
      author={Williams, Dana~P.},
       title={The topology on the primitive ideal space of transformation group
  {$C\sp{\ast} $}-algebras and {C}.{C}.{R}. transformation group {$C\sp{\ast}
  $}-algebras},
        date={1981},
        ISSN={0002-9947},
     journal={Trans. Amer. Math. Soc.},
      volume={266},
      number={2},
       pages={335\ndash 359},
      review={\MR{MR617538 (82h:46081)}},
}

\bib{wil:crossed}{book}{
      author={Williams, Dana~P.},
       title={Crossed products of {$C{\sp \ast}$}-algebras},
      series={Mathematical Surveys and Monographs},
   publisher={American Mathematical Society},
     address={Providence, RI},
        date={2007},
      volume={134},
        ISBN={978-0-8218-4242-3; 0-8218-4242-0},
      review={\MR{MR2288954 (2007m:46003)}},
}

\bib{win:agag83}{article}{
      author={Winkelnkemper, H.~E.},
       title={The graph of a foliation},
        date={1983},
        ISSN={0232-704X},
     journal={Ann. Global Anal. Geom.},
      volume={1},
      number={3},
       pages={51\ndash 75},
      review={\MR{MR739904 (85j:57043)}},
}

\end{biblist}
\end{bibdiv}

\end{document}